\newtheorem{theorem}{Theorem}[section]
\newtheorem{lemma}[theorem]{Lemma}
\numberwithin{equation}{section}
\newcommand{\D}{\Delta}
\newcommand{\ra}{\rightarrow}
\newcommand{\f}{\frac}
\renewcommand{\l}{\lambda}
\renewcommand{\L}{\Lambda}
\newcommand{\be}{\begin{equation}}
\renewcommand{\ra}{\rightarrow}
\newcommand{\ee}{\end{equation}}
\newcommand{\bea}{\begin{eqnarray}}
\newcommand{\eea}{\end{eqnarray}}
\newcommand{\bna}{\begin{eqnarray*}}
\newcommand{\ena}{\end{eqnarray*}}
\renewcommand{\o}{\omega}
\newcommand{\iv}{\int_{V}}
\newcommand{\me}{\mathrm{e}}
\renewcommand{\le}{\left}
\newcommand{\ri}{\right}
\newcommand{\ve}{\vert}
\newcommand{\V}{\Vert}
\newcommand{\na}{\nabla}
\newcommand{\up}{\upsilon}
\journal{***}
\begin{document}
   
\begin{frontmatter}

\title{ Existence of solutions to  a generalized  self-dual Chern-Simons system on finite graphs}

\author{Ruixue Chao$^a$}
\ead{2019309040125@cau.edu.cn}

\author{Songbo Hou$^b$ \corref{cor1}}
\ead{housb@cau.edu.cn}
\author{Jiamin Sun$^c$}
\ead{1416525364@qq.com}

\address{$^{a,b,c}$Department of Applied Mathematics, College of Science, China Agricultural University,  Beijing, 100083, P.R. China}
\cortext[cor1]{Corresponding author: Songbo Hou}

\begin{abstract}
We study a system of equations arising in the Chern-Simons model on finite graphs. Using the iteration scheme and the upper and lower solutions method,  we get existence of solutions in the non-critical case. The critical case is dealt with by priori estimates. Our results generalize those of Huang et al. (Journal of Functional Analysis 281(10) (2021) Paper No. 109218).
\end{abstract}
\begin{keyword}  finite graph\sep Chern-Simons system\sep  upper and lower solutions, priori estimates
\MSC [2020] 35J47, 05C22
\end{keyword}
\end{frontmatter}

\section{Introduction}

The Chern-Simons models describe gauge fields  governed by  Chern-Simons type dynamics, and explain certain phenomena in the fields of particle physics, condensed matter physics and so on \cite{kumar1986charged, zwanziger1968exactly, jain1989composite}.  Some Chern-Simons models can be reduced to elliptic equations with exponential nonlinearities. Many studies were devoted to self-dual Chern-Simons equations including nonrelativistic and relativistic cases, Abelian
and non-Abelian cases.

In this paper, we consider the following Chern-Simons system

\be\label{uve}
\left\{\begin{aligned}
\Delta u &=-\lambda\me^{\up}H(\me^{\up})g(\me^{u})+4\pi\sum\limits_{j=1}^{N_1}\delta_{p_j'},\\
\Delta \up&=-\lambda\me^{u}G(\me^{u})h(\me^{\up})+4\pi\sum\limits_{j=1}^{N_2}\delta_{p_j^{''}},
\end{aligned}
\right.
\ee
on a finite graph, where $G> 0$, $H>0$ are increasing, $C^{\infty}$ functions in $[0,\infty)$; $g$ and $h$ are defined by
  $g(s^2)=\int_s^1 2sG(s^2)$ ds and $h(s^2)=\int_s^1 2sH(s^2)ds$  respectively; $\l>0$ is a constant; $N_1$ and $N_2$ are positive integers;
$\delta_{p}$ is  the Dirac delta mass at vertex $p$. The system (\ref{uve}) was proposed in \cite{nam2013vortex} to study the $U(1)\times U(1)$ Chern-Simons model with a general Higgs potential. For the special case $G\equiv1$ and $H\equiv1$, the existence of solutions to the system (\ref{uve}) was obtained in \cite{lin2007system,lin2009vortex}, and the discrete form of (\ref{uve}) on finite graphs was investigated in \cite{huang2021mean}. For more results on discrete equations with exponential nonlinearities, one  may refer to \cite{gao2022existence,  ge2020p, ge2018kazdan, grigor2016kazdan, hou2022existence, liu2020multiple, lu2021existence, zhang2018p}.

 We write $G=(V,E)$ to denote a connected finite graph, where $V$ and $E$ represent vertices and edges respectively.
 We assume the weight $\o_{xy}>0$ on edge $xy$ is symmetric.
  Let $\mu :V\ra \mathbb{R}^{+}$ be a finite measure. For functions $u,\up:V\ra \mathbb{R}$, we define the
$\mu$-Laplace operator   by
\be\label{mul}\D u(x)=\f{1}{\mu(x)}\sum\limits_{y\sim x}\omega_{xy}(u(y)-u(x)),\ee
and let 
\be\label{lpd}\Gamma(u,\up)=\f{1}{2\mu(x)}\sum\limits_{y\sim x}\omega_{xy}(u(y)-u(x))(\up(y)-\up(x)),\ee
where  $y\sim x$ means vertex $y$ is adjacent
to vertex $x$.
Write 
$$\ve \na u\ve (x) =\le( \f{1}{2\mu(x)}\sum\limits_{y\sim x}\omega_{xy}\le(u(y)-u(x)\ri)^2\ri)^{\f{1}{2}}.$$
For any  function $f:V\ra \mathbb{R}$,  the integral of $f$  over $V$ is defined by
$$\int_V fd\mu=\sum\limits_{x\in V}\mu(x)f(x).$$
We define the Sobolev space as in the Euclidean case by  $$W^{1,2}(V)=\le\{u\,\Big|\, u:V\rightarrow \mathbb{R},\,\iv\le(\ve \na u\ve^2 +u^2\ri)d\mu<+\infty\ri\}.$$

We get the following results about the existence of maximal solutions. 
\begin{theorem}
There exists
$\l_c \geq \f{4\pi\max\{N_1,N_2\}}{G(1)H(1)\ve V\ve }$
such that
\begin{enumerate}[(i)]
\item  If $\l>\l_c$, the system (\ref{uve}) admits a unique maximal solution $(u_{\l},\up_{\l})$ in the sense that if $(u_{\l}',\up_{\l}')$ is any other
solution,  then $u_{\l}>u_{\l}'$, $\up_{\l}>\up_{\l}'$. Moreover, if $\l_1>\l_2>\l_c$, then $u_{\l_1}>u_{\l_2}$
 and $\up_{\l_1}>\up_{\l_2}$.

\item   If $\l<\l_c$, the system (\ref{uve}) admits no solution.

\item  If $\l=\l_c$, the system (\ref{uve}) admits a solution $(u_{*}, \up_{*})$ which satisfies $u_{*}<u_{\l}$ and
$\up_{*}<\up_{\l}$ if $\l_c<\l$.
\end{enumerate}

\end{theorem}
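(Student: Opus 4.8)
The plan is to characterize the threshold as $\lambda_c=\inf\Lambda$, where $\Lambda=\{\lambda>0:\ (\ref{uve})\text{ is solvable}\}$, and to prove the three assertions by combining a maximum-principle/integral analysis with a monotone iteration and a compactness argument at $\lambda_c$. First I would record two structural facts. Integrating each equation of (\ref{uve}) over $V$ and using $\iv\Delta u\,d\mu=0$ gives the identities
\[
\lambda\iv \me^{\upsilon}H(\me^{\upsilon})g(\me^{u})\,d\mu=4\pi N_1,\qquad \lambda\iv \me^{u}G(\me^{u})h(\me^{\upsilon})\,d\mu=4\pi N_2 .
\]
Second, any solution satisfies $u<0$ and $\upsilon<0$ on $V$: evaluating the equation at a vertex where $u$ is maximal forces $g(\me^{u})\geq0$ there, hence $\me^{u}\leq 1$, and the singular source only strengthens this at vertices $p_j'$; the strong maximum principle upgrades it to strict negativity. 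This guarantees $g(\me^{u}),h(\me^{\upsilon})>0$, so the integrands above are positive.

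From these identities I would obtain nonexistence together with the lower bound. Since $u,\upsilon<0$ and $G,H$ are increasing, $\me^{\upsilon}<1$, $H(\me^{\upsilon})\leq H(1)$ and $g(\me^{u})=\int_{\me^{u}}^{1}G\leq G(1)(1-\me^{u})<G(1)$, so the first identity forces $4\pi N_1<\lambda\,G(1)H(1)|V|$, and the second gives the analogue with $N_2$. Hence every $\lambda\in\Lambda$ exceeds $\frac{4\pi\max\{N_1,N_2\}}{G(1)H(1)|V|}$, which simultaneously yields the asserted lower bound for $\lambda_c=\inf\Lambda$ and proves (ii), since nothing below $\lambda_c$ is solvable.

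For (i) I would run a monotone iteration. The pair $(\bar u,\bar\upsilon)\equiv(0,0)$ is a supersolution of (\ref{uve}) for every $\lambda$, because $\Delta 0=0\leq 4\pi\sum_{j}\delta_{p_j'}$ while $g(1)=h(1)=0$. After adding a large constant $K$ to linearize each equation, the map that solves the $(\Delta-K)$ problems with the nonlinearities frozen at the previous iterate is monotone: the first nonlinearity is increasing in $u$ and decreasing in $\upsilon$ (symmetrically for the second), and this is absorbed by the order-reversing property of $(\Delta-K)^{-1}$. For large $\lambda$ I construct a subsolution $(u_0+a,\upsilon_0+b)$ from the regular parts $u_0,\upsilon_0$ of the singular sources, with constants $a,b$ pushed down enough that $\me^{u},\me^{\upsilon}<1$ but bounded away from $0$, so that the subsolution inequalities hold once $\lambda$ is large; this makes $\Lambda$ nonempty. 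For $\lambda>\lambda_c$, choosing $\lambda_0\in(\lambda_c,\lambda)\cap\Lambda$, the solution at $\lambda_0$ is a subsolution at $\lambda$ (as $\lambda>\lambda_0$ and the integrands are positive), so iterating downward from $(0,0)$ produces a solution trapped above it. Since $(0,0)$ dominates every solution, this limit is the maximal solution; uniqueness of the maximal solution and the strict monotonicity $u_{\lambda_1}>u_{\lambda_2}$, $\upsilon_{\lambda_1}>\upsilon_{\lambda_2}$ for $\lambda_1>\lambda_2$ then follow from the maximum principle applied to differences.

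For (iii), take $\lambda_n\downarrow\lambda_c$; the maximal solutions $(u_{\lambda_n},\upsilon_{\lambda_n})$ decrease, so the problem reduces to a uniform lower bound. I would decompose $u_n=\bar u_n+u_n'$ and $\upsilon_n=\bar\upsilon_n+\upsilon_n'$ into averages $\bar u_n=\frac{1}{|V|}\iv u_n\,d\mu$, $\bar\upsilon_n=\frac{1}{|V|}\iv\upsilon_n\,d\mu$ and zero-mean parts. Since $u_n,\upsilon_n\leq0$, the nonlinear terms $\me^{u_n}G(\me^{u_n})h(\me^{\upsilon_n})$ and $\me^{\upsilon_n}H(\me^{\upsilon_n})g(\me^{u_n})$ are bounded in $L^{\infty}$, hence so are $\Delta u_n,\Delta\upsilon_n$, and the Green operator on the connected finite graph bounds the oscillations $u_n',\upsilon_n'$ uniformly. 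The averages are then pinned down by the two integral identities: were $\bar\upsilon_n\to-\infty$, then $\me^{\upsilon_n}\to0$ uniformly and the left side of the first identity would vanish, contradicting $4\pi N_1>0$, and symmetrically for $\bar u_n$. This yields uniform bounds, so $(u_{\lambda_n},\upsilon_{\lambda_n})$ converges to a limit $(u_{*},\upsilon_{*})$ solving (\ref{uve}) at $\lambda_c$, with $u_{*}<u_{\lambda}$, $\upsilon_{*}<\upsilon_{\lambda}$ for $\lambda>\lambda_c$ again by the maximum principle. I expect the \emph{a priori} lower bound at the critical parameter to be the main obstacle: everything else is monotone-iteration bookkeeping, whereas here one must rule out the solutions sliding to $-\infty$ as $\lambda\downarrow\lambda_c$, which is exactly what the oscillation-plus-average decomposition together with the integral identities is designed to prevent.
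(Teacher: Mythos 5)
Your proposal is correct and, for parts (i) and (ii), follows essentially the same route as the paper: defining $\l_c=\inf\Lambda$, deriving the lower bound on $\l_c$ by integrating the equations over $V$ (using $\int_V\Delta u\,d\mu=0$, $\me^{\up}<1$, $H(\me^{\up})\le H(1)$, $g(\me^{u})\le G(1)$), running the monotone iteration downward from the zero supersolution with a shift $\Delta-K$, $K>\l G(1)H(1)$, taking constant subsolutions for large $\l$ and the solution at a smaller admissible $\l$ as a subsolution to show $\Lambda$ is an interval, and getting strictness from connectedness of the graph. The genuine difference is in part (iii). The paper decomposes $u_{\l}=\bar u_{\l}+u_{\l}'$, bounds $\Vert\nabla u_{\l}'\Vert_2$ via the Poincar\'e inequality, bounds $\bar u_{\l}$ from above using $u_0+u_{\l}<0$ and from below by combining the integrated equation with the Trudinger--Moser inequality to control $\int_V\me^{u_{\l}'}d\mu$, and concludes with a $W^{1,2}$ bound and precompactness. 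You instead bound the oscillation $u_{\l}'$ directly in $L^{\infty}$ (the right-hand sides are uniformly bounded, and the Laplacian is invertible on mean-zero functions on a finite connected graph), and then pin the averages down by the integral identities: if $\bar\up_{\l}\to-\infty$ with bounded oscillation, the identity $\l\int_V\me^{\up}H(\me^{\up})g(\me^{u})\,d\mu=4\pi N_1$ fails. This is more elementary --- it avoids both the Poincar\'e and Trudinger--Moser inequalities, which are not really needed in the finite-dimensional setting --- and it yields the same compactness conclusion. Both arguments are valid; yours exploits the finiteness of the graph more directly, while the paper's $W^{1,2}$ scheme is the one that would survive passage to infinite graphs or the continuum.
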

 We also use the iterative scheme as in \cite{lin2009vortex,nam2013vortex, huang2020existence}, while use  different methods in  the proof of the case (iii) in Theorem 1.1. Our results generalize those of Huang et al. \cite{huang2020existence}.

\section{Proof of the main results}
Let $(u_0, \up_0)$  be the solution to the system

\be\left\{\begin{aligned}
\Delta u &=-\f{4\pi N_1}{\ve V \ve}+4\pi\sum\limits_{j=1}^{N_1}\delta_{p_j'},\\
\Delta \up&=-\f{4\pi N_2}{\ve V \ve}+4\pi\sum\limits_{j=1}^{N_2}\delta_{p_j^{''}}.
\end{aligned}
\right.
\ee
 Set $u'=u_0+u$ and $\up'=\up_0+\up$  if $(u',\up')$ is the solution to the system (\ref{uve}). Substituting them into (\ref{uve}) gives
\be\label{uvs}
\left\{\begin{aligned}
\Delta u &=-\l\me ^{\up_0+\up}H(\me^{\up_0+\up})g(\me^{u_0+u})+\f{4\pi N_1}{\ve V\ve},\\
\Delta \up&=-\l\me ^{u_0+u}G(\me^{u_0+u})h(\me^{\up_0+\up})+\f{4\pi N_2}{\ve V\ve}.
\end{aligned}
\right.
\ee
We say that $(u_-,\up_-)$ is a lower solution of (\ref{uvs}) if it satisfies 
\be\label{lsd}
\left\{\begin{aligned}
	\Delta u_- &\geq -\l\me ^{\up_0+\up_-}H(\me^{\up_0+\up_-})g(\me^{u_0+u_-})+\f{4\pi N_1}{\ve V\ve},\\
	\Delta \up_-&\geq-\l\me ^{u_0+u_-}G(\me^{u_0+u_-})h(\me^{\up_0+\up_-})+\f{4\pi N_2}{\ve V\ve}.
\end{aligned}
\right.
\ee
Let $(u_1,\up_1)=(-u_0,-\up_0)$.  We carry out  the following iteration procedure
\be\label{uvi}
\left\{\begin{aligned}
\le(\Delta-K\ri) u_{n+1} &=-\l\me ^{\up_0+\up_n}H(\me^{\up_0+\up_n})g(\me^{u_0+u_n})-Ku_n+\f{4\pi N_1}{\ve V\ve},\\
\le(\Delta-K\ri)\up_{n+1}&=-\l\me ^{u_0+u_n}G(\me^{u_0+u_n})h(\me^{\up_0+\up_n})-K\up_n+\f{4\pi N_2}{\ve V\ve}.
\end{aligned}
\right.
\ee

\begin{lemma}\label{leo}
 Let $\{(u_n,\up_n)\}$  be the sequence determined by (\ref{uvi}). Then for any lower solution $(u_-,\up_-)$ of (\ref{uvs}), there holds
\be\label{lmr}
\left\{\begin{aligned}
u_1> u_2>\cdot\cdot\cdot> u_n>\cdot\cdot\cdot
>u_{-},\\
\up_1> \up_2>\cdot\cdot\cdot> \up_n>\cdot\cdot\cdot
>\up_{-}.
\end{aligned}
\right.
\ee
Furthermore,  if (\ref{uvi}) has a lower solution, it admits a unique maximal solution $(u_{\l},\up_{\l})$ in the sense that if $(u_{\l}',\up_{\l}')$
is  any other  solution, then $u_{\l}>u_{\l}'$, $\up_{\l}>\up_{\l}'$.
 \end{lemma}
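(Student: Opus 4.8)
The plan is to run a monotone iteration (the upper-and-lower-solutions method): $(u_1,\up_1)=(-u_0,-\up_0)$ serves as a starting upper solution, the iterates $\{(u_n,\up_n)\}$ decrease, and they stay trapped above the prescribed lower solution. Write (\ref{uvs}) as $\D u=F_1(u,\up)$, $\D\up=F_2(u,\up)$ with $F_1(u,\up)=-\l\me^{\up_0+\up}H(\me^{\up_0+\up})g(\me^{u_0+u})+\f{4\pi N_1}{\ve V\ve}$ and $F_2$ defined analogously (with $G$, $h$, $N_2$). I would first record the discrete maximum principle: for $K>0$ the operator $\D-K$ is invertible on the finite graph, $(\D-K)w\geq0$ implies $w\leq0$, and $w<0$ everywhere once $(\D-K)w>0$ at one vertex (by connectedness). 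Differentiating $g(s^2)=\int_s^1 2tG(t^2)\,dt$ gives $g'=-G$, and likewise $h'=-H$, so $g(\me^{u_0+u})>0$ exactly when $u_0+u<0$ and $h(\me^{\up_0+\up})>0$ exactly when $\up_0+\up<0$; a direct computation gives $\p_uF_1=\p_\up F_2=\l\me^{u_0+u}G(\me^{u_0+u})\me^{\up_0+\up}H(\me^{\up_0+\up})>0$, while $\p_\up F_1\leq0$ and $\p_uF_2\leq0$ wherever $u_0+u\leq0$ and $\up_0+\up\leq0$. Applying the maximum principle to $u_-+u_0$ and $\up_-+\up_0$ (using the sign of $g,h$ and the equation defining $u_0,\up_0$) yields $u_-\leq u_1$ and $\up_-\leq\up_1$, so everything lives in the box $[u_-,u_1]\times[\up_-,\up_1]$, on which $\p_uF_1=\p_\up F_2\leq\l G(1)H(1)$; I then fix any $K>\l G(1)H(1)$.

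Next I would establish, by simultaneous induction on $n$, the bounds $u_-\leq u_{n+1}\leq u_n$ and $\up_-\leq\up_{n+1}\leq\up_n$. The base case uses $u_0+u_1=0$, which makes $g(\me^{u_0+u_1})=h(\me^{\up_0+\up_1})=0$; a direct subtraction then gives $(\D-K)(u_2-u_1)=4\pi\sum_{j}\delta_{p_j'}\geq0$, strictly positive at the vortices, so $u_2<u_1$, and $u_-\leq u_2$ follows by subtracting the iteration from the lower-solution inequality (\ref{lsd}). For the inductive step I subtract consecutive lines of (\ref{uvi}) and expand by the mean value theorem, getting $(\D-K)(u_{n+1}-u_n)=(\p_uF_1-K)(u_n-u_{n-1})+\p_\up F_1(\up_n-\up_{n-1})$ with the partials evaluated at intermediate points still in the box; since $\p_uF_1-K<0$, $\p_\up F_1\leq0$ and both increments are nonpositive, the right side is positive, whence $u_{n+1}<u_n$. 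The lower bound $u_-\leq u_{n+1}$ is identical, starting from (\ref{lsd}) and using $u_-\leq u_n$. Strict monotonicity then gives $u_n>u_{n+1}\geq u_-$, hence $u_n>u_-$, and the $\up$-component is handled by the symmetric argument with $F_1,F_2$ interchanged.

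Since $V$ is finite, the monotone sequences bounded below converge pointwise to limits $(u_\l,\up_\l)$; letting $n\ra\infty$ in (\ref{uvi}) and using continuity of the nonlinearities shows $\D u_\l=F_1(u_\l,\up_\l)$ and $\D\up_\l=F_2(u_\l,\up_\l)$, i.e. $(u_\l,\up_\l)$ solves (\ref{uvs}). For maximality, any solution $(u_\l',\up_\l')$ satisfies (\ref{lsd}) with equality, hence is a lower solution; the first part then gives $u_n\geq u_\l'$, $\up_n\geq\up_\l'$ for all $n$, and passing to the limit yields $u_\l\geq u_\l'$, $\up_\l\geq\up_\l'$, which also forces uniqueness of the maximal solution.

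The main obstacle is the coupling of the two equations: the scheme closes only because the off-diagonal derivatives $\p_\up F_1$ and $\p_uF_2$ are nonpositive, and this holds only while every iterate stays below $(u_1,\up_1)$ so that $g,h\geq0$ along the way; consequently the upper bound $u_n\leq u_1$ and the lower bound $u_n\geq u_-$ must be propagated together through the induction rather than separately. A secondary delicate point is upgrading the maximality to the strict inequalities $u_\l>u_\l'$, $\up_\l>\up_\l'$ asserted in the statement: for this I would apply the strong maximum principle to the coupled linearized difference $(w,\zeta)=(u_\l-u_\l',\up_\l-\up_\l')\geq0$, showing via the signs $\p_uF_1,\p_\up F_2>0$ and $\p_\up F_1,\p_uF_2\leq0$ that a single vertex where $w$ (or $\zeta$) vanishes forces $w\equiv\zeta\equiv0$, so distinct solutions are separated strictly.
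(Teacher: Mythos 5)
Your proposal is correct and follows essentially the same route as the paper: the same monotone iteration with $K>\l G(1)H(1)$, the same discrete strong maximum principle via connectedness, the same passage to the limit on the finite graph, and the same observation that any solution is itself a lower solution. The only cosmetic difference is that you track the difference of consecutive iterates with a two-variable mean value expansion using the signs $\p_u F_1>0$, $\p_\up F_1\le 0$ on the box, whereas the paper splits the difference and applies a one-variable mean value theorem to $g$ together with the monotonicity of $s\mapsto sH(s)$; these are equivalent computations.
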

 \begin{proof}
We will prove it by the  induction method.  For $n=1$, by the iteration scheme, we have
 \be\label{uvm}
\left\{\begin{aligned}
\le(\Delta-K\ri)\le( u_2-u_1 \ri)&=4\pi\sum\limits_{j=1}^{N_1}\delta_{p_j'},\\
\le(\Delta-K\ri)\le(\up_2-\up_1\ri)&=4\pi\sum\limits_{j=1}^{N_2}\delta_{p_j^{''}}.
\end{aligned}
\right.
\ee
Then the maximum principle, i.e., Lemma 4.1 in \cite{huang2020existence} indicates $u_2\leq u_1$ and $\up_2\leq \up_1$. Suppose that $u_2-u_1$ attains the maximum 0 at some $x_0\in V$. Then by (\ref{uvm}), we obtain $ \Delta \le( u_2-u_1\ri)(x_0)\geq 0$. However, by  (\ref{mul}), $ \Delta \le( u_2-u_1\ri)(x_0)\leq 0$. Hence,  $(u_2-u_1)(x)=(u_2-u_1)(x_0)=0$ if $x\sim x_0$, which yields $(u_2-u_1)(x)\equiv 0$ since $G$ is connected. This leads to  a contradiction with the inequality $\le(\Delta-K\ri)\le( u_2-u_1 \ri)>0$ at $p_j'$. Therefore, $u_2<u_1$,  and similarly,  $\up_2<\up_1$.
 Now suppose that
 \be
\left\{\begin{aligned}
u_1> u_2>\cdot\cdot\cdot> u_n,\\
\up_1> \up_2>\cdot\cdot\cdot> \up_n.
\end{aligned}
\right.
\ee
Choose $K>\l H(1)G(1)$ . It is seen  from (\ref{uvi}) that
\bna\begin{aligned}
(\D-K)(u_{n+1}-u_n)&=-\l\me ^{\up_0+\up_n}H(\me^{\up_0+\up_n})g(\me^{u_0+u_n})+\l\me ^{\up_0+\up_{n-1}}H(\me^{\up_0+\up_{n-1}})g(\me^{u_0+u_{n-1}})-K(u_n-u_{n-1})\\
&\geq-\l H(1)\le(g(\me^{u_0+u_{n}})-g(\me^{u_0+u_{n-1}})\ri)-K(u_n-u_{n-1})\\
&=\le(\l H(1)e^{\xi}G(\me^{\xi})-K\ri)(u_n-u_{n-1})\\
&\geq \le(\l H(1)G(1)-K\ri)(u_n-u_{n-1})\\
&>0,
\end{aligned}
\ena
where we have used the mean value theorem and $u_0+u_n\leq \xi\leq u_0+u_{n-1}$.  Applying the same method as in proving $u_2<u_1$, we obtain
$u_{n+1}<u_n$. Hence, we get
$$u_1> u_2>\cdot\cdot\cdot> u_n>\cdot\cdot\cdot.$$ Similarly, there also holds
$$\up_1> \up_2>\cdot\cdot\cdot> \up_n>\cdot\cdot\cdot.$$

Next we prove $u_n>u_-$ and $\up_n>\up_-$ for any $n$.  For $n=1$, we derive that 
\be\label{lsi}
\begin{aligned}
\D \le(u_--u_1\ri)&\geq -\l\me ^{\up_0+\up_-}H(\me^{\up_0+\up_-})g(\me^{u_0+u_-})+4\pi\sum\limits_{j=1}^{N_1}\delta_{p_j'}\\
&=-\l\me ^{\up_0+\up_-}H(\me^{\up_0+\up_-})\le[g(\me^{u_0+u_-})-g(\me^{u_0+u_1})\ri]+4\pi\sum\limits_{j=1}^{N_1}\delta_{p_j'}\\
&=\l\me ^{\up_0+\up_-}H(\me^{\up_0+\up_-})\me^{\xi}G(\me^{\xi})(u_--u_1)+4\pi\sum\limits_{j=1}^{N_1}\delta_{p_j'},
\end{aligned}\\
\ee
where $\xi$ lies between $u_--u_1$ and $0$.  Noting that $G$ is finite, we have that there exists $x_0$  such that
$\le( u_--u_1\ri)(x_0)=\max\limits_{x\in V}\le( u_--u_1\ri)(x)$. Assuming that $\le( u_--u_1\ri)(x_0)\geq 0$, then by (\ref{lsi}) we have
 $\D \le(u_--u_1\ri)(x_0)\geq 0$. Again,  we have $\D \le(u_--u_1\ri)(x_0)\leq 0$ by (\ref{mul}). Hence, $(u_--u_1)(x)=(u_--u_1)(x_0)$ if $x\sim x_0$, and $(u_--u_1)(x)\equiv(u_--u_1)(x_0)$ since $G$ is connected, which contradicts (\ref{lsi}) at $p_j'$. Hence, the assumption  is not true and $u_-<u_1$. Similarly, $\up_-<\up_1$.  For some $n\geq 1$, assume that $u_-<u_{n-1}$
 and $\up_-<\up_{n-1}$.  In view of (\ref{lsd}) and (\ref{uvi}), we arrive at 
 \bna\begin{aligned}
(\D-K)(u_{-}-u_n)&\geq -\l\me ^{\up_0+\up_-}H(\me^{\up_0+\up_-})g(\me^{u_0+u_-})+\l\me ^{\up_0+\up_{n-1}}H(\me^{\up_0+\up_{n-1}})g(\me^{u_0+u_{n-1}})-K(u_--u_{n-1})\\
&\geq-\l\me ^{\up_0+\up_{n-1}}H(\me^{\up_0+\up_{n-1}})\le(g(\me^{u_0+u_-})-g(\me^{u_0+u_{n-1}})\ri) -K(u_--u_{n-1})\\
&=\l\me ^{\up_0+\up_{n-1}}H(\me^{\up_0+\up_{n-1}})\me^{\xi}G(\me^{\xi})(u_--u_{n-1})-K(u_--u_{n-1})\\
&\geq \le(\l H(1)G(1)-K\ri)(u_--u_{n-1})\\
&>0,
\end{aligned}
\ena
where $u_-+u_0\leq \xi\leq u_{n-1}+u_0$. By the maximum principle, we have $u_-\leq u_n$. Using the same argument as before, we get $u_-<u_n$.
 Similarly, $\up_-<\up_n$.  
 
It easy to see that if the system (\ref{uvs}) has a lower solution, then it admits a solution $(u_{\l}, \up_{\l})=\lim\limits_{n\ra \infty}(u_n,\up_n)$. If $(u_{\l}',\up_{\l}')$ is any other solution,   noting that $(u_{\l}',\up_{\l}')$ is also a lower solution of (\ref{uvs}), there holds $u_{\l}\geq u_{\l}'$, 
$\up_{\l}\geq \up_{\l}'$. Furthermore, proceeding analogously as before, we get 
\bna
	(\D-K)(u_{\l}'-u_{\l})\geq \le(\l H(1)G(1)-K\ri)(u_{\l}'-u_{\l})\geq 0.
\ena
Assuming that  $\max\limits_{x\in V} (u_{\l}'-u_{\l})(x) =(u_{\l}'-u_{\l})(x_0)=0$ for some $x_0\in V$, then we conclude that 
$\D(u_{\l}'-u_{\l})(x_0)\geq 0$. Hence $(u_{\l}'-u_{\l})(x)=0$ if $x\sim x_0$. The connectedness of $G$ leads to  $(u_{\l}'-u_{\l})(x)\equiv 0$. Similarly, $\up_{\l}'(x)\equiv \up_{\l}(x)$. This contradicts the assumption $(u_{\l}, \up_{\l})\neq (u_{\l}',\up_{\l}')$. Therefore, $u_{\l}>u_{\l}'$, $\up_{\l}>\up_{\l}'$. Thus,  in this sense, $(u_{\l}, \up_{\l})$ is a unique maximal solution.  
\end{proof}
\begin{lemma}
The system (\ref{uvs}) has a solution if $\l$ is big enough.
 \end{lemma}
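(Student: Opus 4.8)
The plan is to reduce everything to the construction of a \emph{lower solution}: by Lemma~\ref{leo}, once (\ref{uvs}) admits a lower solution $(u_-,\up_-)$, the monotone sequence $\{(u_n,\up_n)\}$ converges to a (maximal) solution, so it suffices to produce such a pair for all sufficiently large $\l$.

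Two preliminary observations make the construction transparent. First, since $G=(V,E)$ is a finite graph, $u_0$ and $\up_0$ are bounded functions on $V$, hence $\me^{u_0}$ and $\me^{\up_0}$ are pinched between positive constants. Second, the substitution $\tau=s^2$ in the definitions gives $g(w)=\int_w^1 G(\tau)\,d\tau$ and $h(w)=\int_w^1 H(\tau)\,d\tau$; thus $g$ and $h$ are continuous, positive, and decreasing on $[0,1)$, and vanish at $1$.

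I would then test the \emph{constant} pair $u_-\equiv\up_-\equiv -C$. Choose $C>0$ so large that $u_0-C<0$ and $\up_0-C<0$ at every vertex; this is possible because $u_0,\up_0$ are bounded and $V$ is finite. For such $C$ one has $\D u_-=\D\up_-=0$, while $g(\me^{u_0-C})>0$, $h(\me^{\up_0-C})>0$ and $G,H>0$ everywhere, so the quantities $\me^{\up_0-C}H(\me^{\up_0-C})g(\me^{u_0-C})$ and $\me^{u_0-C}G(\me^{u_0-C})h(\me^{\up_0-C})$ are positive at each vertex. Because $V$ is finite, each is bounded below by a positive constant, say $c_1$ and $c_2$. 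The defining inequalities (\ref{lsd}) of a lower solution then collapse to
\be
\left\{\begin{aligned}
\l\,\me^{\up_0-C}H(\me^{\up_0-C})g(\me^{u_0-C})&\geq \f{4\pi N_1}{\ve V\ve},\\
\l\,\me^{u_0-C}G(\me^{u_0-C})h(\me^{\up_0-C})&\geq \f{4\pi N_2}{\ve V\ve},
\end{aligned}\right.
\ee
which hold at every vertex as soon as $\l\geq\max\{4\pi N_1/(\ve V\ve\, c_1),\,4\pi N_2/(\ve V\ve\, c_2)\}$. For such $\l$ the pair $(-C,-C)$ is a lower solution, and Lemma~\ref{leo} completes the argument.

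Conceptually there is no deep obstacle; the single point demanding care is the \emph{uniformity} of the threshold, i.e.\ that $c_1,c_2$ can be chosen independently of the vertex. This is precisely where the finiteness of $V$ is used: the minimum over the vertices of the positive quantities above is attained and strictly positive. On a continuous domain the analogous bound would degenerate near the zeros of the source (the points $p_j',p_j''$), but on a finite graph no such difficulty arises.
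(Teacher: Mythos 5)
Your proposal is correct and follows essentially the same route as the paper: take a constant pair below $-u_0$ and $-\up_0$, note that the nonlinear terms are then bounded below by a positive constant thanks to the finiteness of $V$, and conclude that the lower-solution inequalities hold for all large $\l$, after which Lemma~\ref{leo} finishes the argument. Your version merely spells out the uniform positive lower bound that the paper leaves as ``obvious.''
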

\begin{proof}
Observe that the functions $u_0$ and $\up_0$ are bounded since $G$ is finite. Thus, there exists $(c_1,c_2)$ such that $u_0-c_1<0$  and $\up_0-c_2<0$.
Let $(u_-,\up_-)=(-c_1,-c_2)$.  It is obvious that
\be
\left\{\begin{aligned}
\Delta u_- &\geq -\l\me ^{\up_0+\up_-}H(\me^{\up_0+\up_-})g(\me^{u_0+u_-})+\f{4\pi N_1}{\ve V\ve},\\
\Delta \up_-&\geq -\l\me ^{u_0+u_-}G(\me^{u_0+u_-})h(\me^{\up_0+\up_-})+\f{4\pi N_2}{\ve V\ve},
\end{aligned}
\right.
\ee
if $\l$ is big enough. Hence $(u_-,\up_-)$ is a lower solution of the system (\ref{uvs}). This guarantees the existence of the solution.
\end{proof}

\begin{lemma}\label{exl}
 There exists $\l_c>0$ such that if $\l>\l_c$,  the system (\ref{uvs}) admits a solution, while if $\l<\l_c$,  the system (\ref{uvs}) admits no solution.
 \end{lemma}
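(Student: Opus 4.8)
The plan is to introduce the solvability set
$\Lambda=\{\l>0:\ (\ref{uvs})\text{ admits a solution}\}$, to show that it is a half-line bounded away from the origin, and then to take $\l_c:=\inf\Lambda$. The preceding lemma guarantees that $\Lambda$ contains every sufficiently large $\l$, so $\Lambda\neq\emptyset$. The lemma will follow once I establish two facts: (a) if $\l_0\in\Lambda$ and $\l>\l_0$, then $\l\in\Lambda$, so that $\Lambda$ is an ``upper set'' and $\Lambda\supseteq(\l_c,\infty)$ while no $\l<\l_c$ is solvable (indeed $\l\in\Lambda$ would force $\l\geq\inf\Lambda=\l_c$); and (b) $\l_c>0$.

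For (a) the mechanism is that a solution at a smaller parameter serves as a \emph{lower} solution at a larger one, after which Lemma~\ref{leo} upgrades it to a genuine solution. Concretely, if $(u,\up)$ solves (\ref{uvs}) with parameter $\l_0$, set $u'=u_0+u$, $\up'=\up_0+\up$; using $\D u=-\l_0\me^{\up'}H(\me^{\up'})g(\me^{u'})+\f{4\pi N_1}{\ve V\ve}$ and the sign information $g(\me^{u'}),h(\me^{\up'})\geq 0$ established below, I rewrite for $\l>\l_0$
\be
\D u=-\l\me^{\up'}H(\me^{\up'})g(\me^{u'})+(\l-\l_0)\me^{\up'}H(\me^{\up'})g(\me^{u'})+\f{4\pi N_1}{\ve V\ve}\geq -\l\me^{\up'}H(\me^{\up'})g(\me^{u'})+\f{4\pi N_1}{\ve V\ve},
\ee
together with the analogous inequality for $\up$. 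Hence $(u,\up)$ satisfies (\ref{lsd}) with parameter $\l$, i.e.\ it is a lower solution of (\ref{uvs}), and Lemma~\ref{leo} produces a solution at $\l$, so $\l\in\Lambda$.

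The hard part is the sign information, namely that every solution obeys $u'\leq 0$ and $\up'\leq 0$ (so that $g(\me^{u'}),h(\me^{\up'})\geq 0$). I would prove this by the discrete maximum principle applied to each equation of (\ref{uve}) separately. Let $x_0$ be a vertex where $u'$ is maximal, so $\D u'(x_0)\leq 0$ by (\ref{mul}). If $x_0$ is none of the vortices $p_j'$, the first equation reads $\D u'(x_0)=-\l\me^{\up'(x_0)}H(\me^{\up'(x_0)})g(\me^{u'(x_0)})$, which forces $g(\me^{u'(x_0)})\geq 0$; if $x_0=p_j'$, the extra Dirac contribution $4\pi\delta_{p_j'}(x_0)>0$ only sharpens this to $g(\me^{u'(x_0)})>0$. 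Since $g(s^2)=\int_s^1 2tG(t^2)\,dt$ is positive for $s<1$, vanishes at $s=1$ and is negative for $s>1$, we conclude $u'(x_0)\leq 0$, hence $u'\leq 0$ throughout $V$; the same argument on the second equation gives $\up'\leq 0$. The only delicate point is the behaviour at the singular vertices, and precisely there the Dirac term carries the favourable sign, so it poses no obstruction.

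Finally, for (b) I integrate the first equation of (\ref{uve}) over $V$: since $\iv\D u'\,d\mu=0$, we obtain $\l\iv\me^{\up'}H(\me^{\up'})g(\me^{u'})\,d\mu=4\pi N_1$. Using $u',\up'\leq 0$ and the monotonicity of $G,H$, the integrand is bounded by $\me^{\up'}H(\me^{\up'})\leq H(1)$ and $g(\me^{u'})=\int_{\me^{u'/2}}^1 2tG(t^2)\,dt\leq G(1)(1-\me^{u'})\leq G(1)$, so $4\pi N_1\leq \l G(1)H(1)\ve V\ve$; the second equation gives $4\pi N_2\leq \l G(1)H(1)\ve V\ve$ in the same way. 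Thus every solvable $\l$ satisfies $\l\geq \f{4\pi\max\{N_1,N_2\}}{G(1)H(1)\ve V\ve}$, whence $\l_c=\inf\Lambda\geq \f{4\pi\max\{N_1,N_2\}}{G(1)H(1)\ve V\ve}>0$. This proves the lemma and simultaneously recovers the explicit lower bound on $\l_c$ asserted in Theorem 1.1.
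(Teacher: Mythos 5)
Your proposal is correct and follows essentially the same route as the paper: define the solvability set $\Lambda$, show it is an upper half-line via the observation that a solution at a smaller $\l$ is a lower solution at a larger $\l$ (then invoke Lemma~\ref{leo}), set $\l_c=\inf\Lambda$, and obtain the lower bound on $\l_c$ by integrating the equations over $V$. The only difference is that you explicitly verify the sign condition $u_0+u\leq 0$, $\up_0+\up\leq 0$ by the maximum principle --- a point the paper leaves implicit but which is indeed needed both for the lower-solution step and for the integral estimate.
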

 \begin{proof}
 If the system (\ref{uvs}) admits a solution $(u,\up)$, then by integrating both sides of equations in (\ref{uvs})  on $V$, we get the necessary condition 
\be\label{lst}
\l\geq \f{4\pi\max\{N_1,N_2\}}{G(1)H(1)\ve V\ve }.
\ee
Define the set
$$\Lambda:=\Big\{\l>0\,\big|  \text{ $\l $ is  such  that the system (\ref{uvs})  has a solution}\Big\}.$$
Assume that $\l\in \Lambda$ and denote by $(u_{\l}, \up_{\l})$ the solution to the system (\ref{uvs}). For $\l_1\in \Lambda$ and $\l_1<\l_2$, it follows from (\ref{uvs}) that 
 $(u_{\l_1}, \up_{\l_1})$ is a  lower solution for (\ref{uvs}) with $\l=\l_2$.  Hence, we infer that  $[\l_1,+\infty)\subset \Lambda$ and $\Lambda$ is an interval.   Denote
 $\l_c=\inf\{\l\,|\,\l\in \L\}$. The inequality (\ref{lst}) yields $\l_c\geq \f{4\pi\max\{N_1,N_2\}}{G(1)H(1)\ve V\ve }.$  This completes the proof.
\end{proof}

 Lemma \ref{leo} and Lemma \ref{exl} indicate that if $\l>\l_c$, the system (\ref{uvs}) has a maximal solution.
Denote  by  $\{(u_{\l}, \up_{\l})\,|\,\l>\l_c\}$ the family of maximal solutions of (\ref{uvs}).  Assume  $\l_1>\l_2>\l_c$.   It is easy to check that
\bna\begin{aligned}
\D u_{\l_2}&=-\l_2\me ^{\up_0+\up_{\l_2}}H(\me^{\up_0+\up_{\l_2}})g(\me^{u_0+u_{\l_2}})+\f{4\pi N_1}{\ve V\ve}\\
&= -\l_1\me ^{\up_0+\up_{\l_2}}H(\me^{\up_0+\up_{\l_2}})g(\me^{u_0+u_{\l_2}})+\f{4\pi N_1}{\ve V\ve}\\
&\,\,\,\,\,+(\l_1-\l_2)\me ^{\up_0+\up_{\l_2}}H(\me^{\up_0+\up_{\l_2}})g(\me^{u_0+u_{\l_2}})\\
&\geq -\l_1\me ^{\up_0+\up_{\l_2}}H(\me^{\up_0+\up_{\l_2}})g(\me^{u_0+u_{\l_2}})+\f{4\pi N_1}{\ve V\ve}.
\end{aligned}\ena
Similarly, $$\D \up_{\l_2}\geq -\l_1\me ^{u_0+u_{\l_2}}G(\me^{u_0+u_{\l_2}})h(\me^{\up_0+\up_{\l_2}})+\f{4\pi N_2}{\ve V\ve}.$$
Hence,  $(u_{\l_2}, \up_{\l_2})$ is a lower solution of (\ref{uvs}) with $\l=\l_1$. Thus, $u_{\l_1}\geq u_{\l_2}$and $\up_{\l_1}\geq \up_{\l_2} $ by Lemma  \ref{leo}. Furthermore,  the same argument as before leads to the inequality  
\bna
	\D (u_{\l_2}-u_{\l_1})> \l_1G(1)H(1)(u_{\l_2}-u_{\l_1}).
\ena
 Assuming that $\max\limits_{x\in V}(u_{\l_2}-u_{\l_1})(x)=(u_{\l_2}-u_{\l_1})(x_0)=0$ for some $x_0\in V$. It follows that $\D(u_{\l_2}-u_{\l_1})(x_0)>0$, which is impossible. Hence $u_{\l_1}(x)>u_{\l_2}(x)$ for all $x\in V$. Similarly, $\up_{\l_1}>\up_{\l_2}$.
Next we use priori estimates to deal with the critical case. We make the decomposition $u_{\l}=\bar{u}_{\l}+u'_{\l}$,  where $\bar{u}_{\l}=\f{1}{\ve V\ve}\int_{V}u_{\l} d\mu$ and $u'_{\l}=u_{\l}-\bar{u}_{\l}$.  By (\ref{uvs}), we get
\bna\begin{split}
\V\na u'_{\l}\V_2^2&=\l\iv \me ^{\up_0+\up_{\l}}H(\me^{\up_0+\up_{\l}})g(\me^{u_0+u_{\l}})u'_{\l} d\mu\\
&\leq \l G(1)H(1)\iv \ve u'_{\l} \ve d\mu\leq C\l\ve V\ve^{1/2}\V\na  u'_{\l} \V_2,
\end{split}
\ena
where we have used the Poincar\'{e} inequality, i.e., Lemma 6 in \cite{grigor2016kazdan}. Hence
\be\label{na}
\V\na u'_{\l}\V_2\leq C\l.\ee
Noting $u_0+u_{\l}=u_0+\bar{u}_{\l}+u'_{\l}<0$, by integration on $V$,  we get
\be\label{upb}
\bar{u}_{\l}<-\f{1}{\ve V\ve}\iv u_0(x)d\mu.
\ee
By integrating the second equation in (\ref{uvs}) on $V$, it yields
\be\label{lb}\l \iv \me^{u_0+u_{\l}}d\mu\geq \f{4\pi N_2}{G(1)H(1)}.\ee
 Using   the Trudinger-Moser inequality, i.e., Lemma 7 in \cite{grigor2016kazdan},  we obtain

 \be\label{lo}
\begin{split}
\iv \me^{u_0+u_{\l}}d\mu &= \iv \me^{u_0+\bar{u}_{\l}+u'_{\l}}d\mu\leq \me^{\bar{u}_{\l}}\max\limits_{x\in V}\me^{u_0}\iv \me^{u'_{\l}}d\mu\\
&\leq C \me^{\bar{u}_{\l}}\iv \me^{\V \na u'_{\l}\V_2\f{u'_{\l}}{\V \na u'_{\l}\V_2}}d\mu\leq C \me^{\bar{u}_{\l}}\iv\me^{\V \na u'_{\l}\V_2^2+\f{|u'_{\l}|^2}{4\V \na u'_{\l}\V_2^2}}d\mu\\
&\leq  C \me^{\bar{u}_{\l}}e^{\V\na u'_{\l}\V_2^2}.
\end{split}
\ee
Then (\ref{lb}) and (\ref{lo}) give
$$\me^{\bar{u}_{\l}}\geq C\l^{-1}\me^{-\V \na  u'_{\l}\V_2^2},$$
which together with  (\ref{na}) and (\ref{upb}) gives $$\ve \bar{u}_{\l}\ve\leq C(1+\l+\l^2).$$
Furthermore,
\be\label{pre}
\V u_{\l}\V_{W^{1,2}(V)}\leq C(1+\l+\l^2).\ee
Similarly,
\be\label{prt}
\V \up_{\l}\V_{W^{1,2}(V)}\leq C(1+\l+\l^2).\ee
Set $\l_c< \l<\l_c+1$. Noting (\ref{pre}) and (\ref{prt}) and the fact that the  space  $W^{1,2}(V)$ is precompact, we conclude
$u_{\l}\ra u_*\in W^{1,2}(V)$, $\up_{\l}\ra \up_*\in W^{1,2}(V)$,  pointwisely,
as $\l\ra \l_c$.
Hence,  we deduce that 
$$\D u_{\l}\ra \D u_*,\,\,\D \up_{\l}\ra \D \up_*,$$

$$\l\me ^{\up_0+\up_{\l}}H(\me^{\up_0+\up_{\l}})g(\me^{u_0+u_{\l}}) \ra \l_c\me ^{\up_0+\up_*}H(\me^{\up_0+\up_*})g(\me^{u_0+u_*}),$$
$$\l\me ^{u_0+u_{\l}}G(\me^{u_0+u_{\l}})h(\me^{\up_0+\up_{\l}})\ra \l_c\me ^{u_0+u_*}G(\me^{u_0+u_*})h(\me^{\up_0+\up_*}),$$
as $\l\ra
\l_c$.
Thus,  $(u_*,\up_*)$ is a solution of (\ref{uvs}) with $\l=\l_c$. The  following lemma is established.
\begin{lemma}
If $\l=\l_c$, then the system (\ref{uvs}) admits a solution.
\end{lemma}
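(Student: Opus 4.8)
The plan is to obtain the critical solution as a limit of the maximal solutions. By Lemma \ref{leo} and Lemma \ref{exl} the system (\ref{uvs}) admits a maximal solution $(u_{\l},\up_{\l})$ for every $\l>\l_c$, and by the monotonicity established above the family $\{(u_{\l},\up_{\l})\,|\,\l_c<\l<\l_c+1\}$ decreases as $\l\ra\l_c^+$. I would therefore fix this window, prove uniform a priori bounds for the family in $W^{1,2}(V)$, and then exploit the finite-dimensionality of $W^{1,2}(V)$ on a finite graph to extract a convergent subsequence whose limit solves (\ref{uvs}) at $\l=\l_c$.

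Two features organize the estimate. First, the iteration starts at $(-u_0,-\up_0)$ and decreases, so the solutions inherit the sign condition $u_0+u_{\l}<0$ and $\up_0+\up_{\l}<0$; this keeps $g,h$ positive and bounds the coefficients by $G(1),H(1)$. Second, I split $u_{\l}=\bar{u}_{\l}+u'_{\l}$ into mean and oscillation. Testing the first equation of (\ref{uvs}) against $u'_{\l}$ and using $0<\me^{\up_0+\up_{\l}}H(\me^{\up_0+\up_{\l}})g(\me^{u_0+u_{\l}})\leq G(1)H(1)$ together with the Poincar\'{e} inequality (Lemma 6 of \cite{grigor2016kazdan}) yields the gradient bound (\ref{na}), namely $\V\na u'_{\l}\V_2\leq C\l$. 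The upper bound on the mean, (\ref{upb}), is immediate from integrating $u_0+u_{\l}<0$ over $V$.

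The delicate step, and the main obstacle, is the lower bound on the mean $\bar{u}_{\l}$, i.e. ruling out that $\bar{u}_{\l}\ra-\infty$ as $\l\ra\l_c^+$; this is precisely where the exponential nonlinearity could degenerate and the limit could fail to exist. Here I would integrate the second equation of (\ref{uvs}) to obtain the positive lower bound (\ref{lb}) on $\l\iv\me^{u_0+u_{\l}}d\mu$, and control the same integral from above by the Trudinger--Moser inequality (Lemma 7 of \cite{grigor2016kazdan}) as in (\ref{lo}), giving $\iv\me^{u_0+u_{\l}}d\mu\leq C\me^{\bar{u}_{\l}}\me^{\V\na u'_{\l}\V_2^2}$. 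Combining these and inserting the gradient bound forces $\me^{\bar{u}_{\l}}\geq C\l^{-1}\me^{-\V\na u'_{\l}\V_2^2}$, hence a lower bound on $\bar{u}_{\l}$; with (\ref{upb}) this produces $\ve\bar{u}_{\l}\ve\leq C(1+\l+\l^2)$ and therefore the uniform estimate (\ref{pre}), the symmetric argument for the second equation giving (\ref{prt}).

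Finally, with (\ref{pre}) and (\ref{prt}) holding uniformly on $\l_c<\l<\l_c+1$, precompactness of bounded sets in the finite-dimensional space $W^{1,2}(V)$ lets me extract a sequence $\l\ra\l_c$ along which $u_{\l}\ra u_*$ and $\up_{\l}\ra\up_*$ pointwise. Since $\D$ and each composite nonlinearity such as $\me^{\up_0+\cdot}H(\me^{\up_0+\cdot})g(\me^{u_0+\cdot})$ are continuous, I can pass to the limit termwise in (\ref{uvs}), concluding that $(u_*,\up_*)$ solves the system with $\l=\l_c$.
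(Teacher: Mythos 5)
Your proposal is correct and follows essentially the same route as the paper: the same mean--oscillation decomposition $u_{\l}=\bar{u}_{\l}+u'_{\l}$, the same Poincar\'e-based gradient bound, the same combination of the integrated equation with the Trudinger--Moser inequality to pin down $\bar{u}_{\l}$ from below, and the same compactness passage to the limit as $\l\ra\l_c^+$. Your remark that precompactness comes from the finite-dimensionality of $W^{1,2}(V)$ on a finite graph is a slightly cleaner justification than the paper's phrasing, but the argument is the same.
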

Arguing as in proving that $(u_{\l},\up_{\l})$ is monotone, one can show that $ u_{\l}>u_*$ and $\up_{\l}>\up_{*}$ if $\l>\l_c$.

\vskip 20 pt
\noindent{\bf Acknowledgement}

This work is  partially  supported by the National Natural Science Foundation of China (Grant No. 11721101), and by National Key Research and Development Project SQ2020YFA070080.
\vskip 20 pt

\bibliographystyle{elsarticle-num-names}
\bibliography{CHS}

\end{document}